\newtheorem{theorem}{Theorem}[section]
\newtheorem{lemma}[theorem]{Lemma}
\theoremstyle{definition}
\theoremstyle{remark}
\newtheorem{remark}[theorem]{Remark}
\numberwithin{equation}{section}
\newcommand{\R}{\ensuremath{\mathbb{R}}}
\newcommand{\C}{\ensuremath{\mathbb{C}}}
\begin{document}

\title[Lagrangian submanifolds of the complex quadric]{Lagrangian submanifolds of the complex quadric \\ as Gauss maps of hypersurfaces of spheres}


\author{Joeri Van der Veken}
\address{KU Leuven, Department of Mathematics, Celestijnenlaan 200B - Box 2400, 3001 Leuven, Belgium}
\curraddr{}
\email{joeri.vanderveken@kuleuven.be}
\thanks{The first author is supported by the Excellence Of Science project G0H4518N of the Belgian government and both authors are supported by project 3E160361 of the KU Leuven Research Fund.}

\author{Anne Wijffels}
\address{KU Leuven, Department of Mathematics, Celestijnenlaan 200B - Box 2400, 3001 Leuven, Belgium}
\curraddr{}
\email{anne.wijffels@kuleuven.be}
\thanks{}

\subjclass[2010]{Primary: 53C42; Secondary: 53D12; 53B25}

\date{}

\begin{abstract}
The Gauss map of a hypersurface of a unit sphere $S^{n+1}(1)$ is a Lagrangian immersion into the complex quadric $Q^n$ and, conversely, every Lagrangian submanifold of $Q^n$ is locally the image under the Gauss map of several hypersurfaces of $S^{n+1}(1)$. In this paper, we give explicit constructions for these correspondences and we prove a relation between the principal curvatures of a hypersurface of $S^{n+1}(1)$ and the local angle functions of the corresponding Lagrangian submanifold of $Q^n$. The existence of such a relation is remarkable since the definition of the angle functions depends on the choice of an almost product structure on $Q^n$ and since several hypersurfaces of $S^{n+1}(1)$, with different principal curvatures, correspond to the same Lagrangian submanifold of $Q^n$.
\end{abstract}

\maketitle


\section{The geometry of the complex quadric}

Let $\C P^{n+1}(4)$ be the complex projective space of complex dimension $n+1$ equipped with the Fubini-Study metric $g_{FS}$ of constant holomorphic sectional curvature $4$. Then the Hopf fibration $\pi : S^{2n+3}(1) \subseteq \C^{n+2} \to \C P^{n+1}(4): z \mapsto [z]$ is a Riemannian submersion from the unit sphere of real dimension $2n+3$ to $\C P^{n+1}(4)$. Remark that for any $z \in S^{2n+3}(1)$ we have $\pi^{-1}\{[z]\} = \{e^{it}z \ | \ t\in\R\}$ and $\ker (d\pi)_z = \mathrm{span} \{iz\}$. The complex structure $J$ on $\C P^{n+1}(4)$ is induced from multiplication by $i$ on $TS^{2n+3}(1)$ and it is well-known that $(\C P^{n+1}(4), g_{FS}, J)$ is a K\"ahler manifold.

We define the \textit{complex quadric} of complex dimension $n$ as the following complex hypersurface of $\C P^{n+1}(4)$:
\begin{equation*} \label{def_Qn}
Q^n = \{ [(z_0,\ldots,z_{n+1})] \in \C P^{n+1}(4) \ | \ z_0^2 + \ldots + z_{n+1}^2 = 0 \}.
\end{equation*}
If $Q^n$ is equipped with the induced metric $g_{FS}|_{Q^n}$, which we will denote by $g$, and the induced almost complex structure $J|_{Q^n}$, which we will again denote by $J$, then $(Q^n,g,J)$ is of course a K\"ahler manifold itself. The inverse image of $Q^n$ under the Hopf fibration is the $(2n+1)$-dimensional Stiefel manifold
\begin{equation*}
V^{2n+1} \! = \! \left\{ u+iv \, \left| \, u,v\in\R^{n+2}, \, \langle u,u \rangle = \langle v,v \rangle = \frac 12, \, \langle u,v \rangle = 0 \right. \right\} \subseteq S^{2n+3}(1),
\end{equation*}
where $\langle \cdot,\cdot \rangle$ denotes the Euclidean inner product on $\R^{n+2}$. From this perspective, it is easy to see that $Q^n$ can be identified with the Grassmannian of oriented $2$-planes in $\R^{n+2}$ and hence, as a homogeneous space, is
\begin{equation*} 
Q^n = \frac{\mathrm{SO}(n+2)}{\mathrm{SO}(n) \times \mathrm{SO}(2)}. 
\end{equation*}

Denote by $\mathcal A$ the set of all shape operators of $Q^n$ in $\C P^{n+1}(4)$ associated with unit normal vector fields. Since we need it in the next sections, we allow for elements of $\mathcal A$ to be defined only on a subset of $Q^n$. One can deduce the following (see for example \cite{Reckziegel1996} or \cite{Smyth1967}).

\begin{lemma} \label{lem1}
Any $A \in \mathcal A$ is involutive, symmetric and anti-commutes with $J$. 
\end{lemma}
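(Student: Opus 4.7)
The three claims are treated in order of difficulty. Symmetry of $A$ is immediate from the symmetry of the second fundamental form of any submanifold of a Riemannian manifold. For the anti-commutation with $J$: write $A=A_N$ for some local unit normal $N$. Because $Q^n$ is a complex submanifold of the K\"ahler manifold $(\C P^{n+1}(4), g_{FS}, J)$, its normal bundle is $J$-invariant, so $JN$ is again a local unit normal, with associated shape operator $A_{JN}\in\mathcal A$. The K\"ahler condition $\bar\nabla J=0$ fed into the Weingarten formula gives $A_{JN}=J\circ A$. The symmetry of $A$ and $A_{JN}$, combined with the $g$-skew-adjointness of $J$, then forces $AJ+JA=0$.

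Involutivity is the substantive point and exploits the specific shape of $Q^n$. The plan is to compute $A$ explicitly via the Hopf fibration $\pi:S^{2n+3}(1)\to\C P^{n+1}(4)$, whose inverse image of $Q^n$ is the Stiefel manifold $V^{2n+1}\subseteq S^{2n+3}(1)\subseteq\C^{n+2}$. Fix a representative $z=u+iv\in V^{2n+1}$ of $p=[z]\in Q^n$. Direct algebraic checks show that $\bar z=u-iv$ is a horizontal unit vector normal to $V^{2n+1}$ in $S^{2n+3}(1)$, so that $N:=d\pi(\bar z)$ is a unit normal vector to $Q^n$ at $p$. A parallel check shows that the Euclidean conjugation $X=X_1+iX_2\mapsto\bar X=X_1-iX_2$ on $\C^{n+2}$ preserves the horizontal tangent space of $V^{2n+1}$ at $z$, and therefore descends to an $\R$-linear involution $\sigma$ of $T_pQ^n$.

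Since the flat connection $D$ on $\C^{n+2}$ satisfies $D_{\tilde X}\bar z=\bar{\tilde X}$ (conjugation is $\R$-linear) and $\bar{\tilde X}$ is again horizontal and tangent to $V^{2n+1}$, O'Neill's horizontal-lift formula converts this flat derivative, via the intermediate sphere connection, into the shape-operator identity $AX=-\sigma(X)$ on $T_pQ^n$. Involutivity then drops out as $A^2=\sigma^2=I$, and the same formula recovers symmetry and the relation $AJ+JA=0$ as a consistency check; the extension from this particular $N$ to a general unit normal $e^{i\theta}\bar z$ is automatic because $A$ anti-commutes with $J$. The main obstacle is not the final computation but the careful bookkeeping needed to convert the flat computation on $\C^{n+2}$ into the shape operator of $Q^n$ in $\C P^{n+1}(4)$: one has to track horizontal and vertical components through both stages $\C^{n+2}\supseteq S^{2n+3}(1)\supseteq V^{2n+1}$ and $\C P^{n+1}(4)\supseteq Q^n$, and to justify applying O'Neill's formula to the (non-$U(1)$-invariant) lift $\bar z$, which is handled by working on a local section of $\pi$.
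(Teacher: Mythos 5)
Your plan is sound, and it is worth noting that the paper itself offers no proof of this lemma -- it only cites Reckziegel and Smyth -- so your self-contained argument is a genuine addition rather than a deviation. The symmetry and anti-commutation arguments are the standard ones for a complex hypersurface of a K\"ahler manifold and are correct as stated. For involutivity, the formula you arrive at, $AX=-(d\pi)(\overline{\tilde X})$ with $\tilde X$ the horizontal lift of $X$, is precisely the formula the paper records later in Remark \ref{canonical_choice}, equation \eqref{eq:canonical_choice}, so your computation is consistent with the machinery the authors actually use; from it, $A^2=\sigma^2=\mathrm{id}$ and the anti-commutation with $J$ (since $\overline{iX}=-i\overline{X}$) follow at once, and your reduction of a general unit normal $e^{i\theta}\bar z$ to this case via $A_{J\zeta}=JA_\zeta$ is a correct two-line check. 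The one place where your bookkeeping remark deserves sharpening: a \emph{horizontal} local section of $\pi$ over $Q^n$ does not exist, even locally, because $Q^n$ is a complex (not totally real) submanifold of $\C P^{n+1}(4)$; any local section $s$ you use will have $ds(X)=\tilde X+\alpha(X)\,iz$ with $\alpha$ a nonvanishing one-form. This does not damage the argument, since differentiating $\overline{s}$ then produces the extra term $-\alpha(X)\,i\bar z$, which lies in the complex span of $\bar z$ and hence projects into the normal bundle of $Q^n$; it therefore contributes only to the normal-connection part and not to the shape operator. Indeed this surviving normal term is exactly the $s(X)J\zeta$ appearing in Lemma \ref{lem2}, and the non-vanishing of that one-form is the geometric reflection of the non-existence of a horizontal section. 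With that caveat addressed, filling in the O'Neill-type projection identities turns your plan into a complete proof.
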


This implies in particular that $\mathcal A$ is a family of almost product structures. However, these almost product structures are not integrable. In fact, we have the following equalities, which can be found in \cite{Smyth1967}.

\begin{lemma} \label{lem2}
Let $\zeta$ be a unit normal vector field along $Q^n$ in $\C P^{n+1}(4)$ with corresponding shape operator $A$. Then there exists a non-zero one-form $s$ such that $\nabla^{\C P^{n+1}(4)}_X \zeta = - AX + s(X) J\zeta$ and $\nabla^{Q^n}_X A = s(X) JA$ for all $X$ tangent to $Q^n$, where $\nabla^{\C P^{n+1}(4)}$ and $\nabla^{Q^n}$ are the Levi Civita connections of $\C P^{n+1}(4)$ and $(Q^n,g)$ respectively.
\end{lemma}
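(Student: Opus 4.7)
The plan is to prove the two assertions separately. The first is essentially the Weingarten formula applied to the codimension-two complex submanifold $Q^n \subset \C P^{n+1}(4)$: its normal bundle is of rank two, spanned by $\{\zeta, J\zeta\}$. Writing $\nabla^{\C P^{n+1}(4)}_X \zeta = -AX + \nabla^\perp_X \zeta$, the constancy $|\zeta|\equiv 1$ yields $g(\nabla^\perp_X\zeta,\zeta) = 0$, so $\nabla^\perp_X\zeta$ is a scalar multiple of $J\zeta$, and this scalar defines the one-form $s$.

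For the second assertion, I would derive a Codazzi-type identity by computing $\nabla^{\C P^{n+1}(4)}_X \nabla^{\C P^{n+1}(4)}_Y\zeta - \nabla^{\C P^{n+1}(4)}_Y\nabla^{\C P^{n+1}(4)}_X\zeta - \nabla^{\C P^{n+1}(4)}_{[X,Y]}\zeta$, which equals $R^{\C P^{n+1}(4)}(X,Y)\zeta$. Substituting the first formula, applying the Gauss formula (with $A_{J\zeta} = JA$, a consequence of Lemma~\ref{lem1}), and using $\nabla J = 0$ makes each term explicit. On the other hand, the standard curvature formula for $\C P^{n+1}(4)$, together with the fact that $J$ preserves both $TQ^n$ and $T^\perp Q^n$, gives $R^{\C P^{n+1}(4)}(X,Y)\zeta = -2g(JX,Y)J\zeta$, which has no tangential component. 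Matching tangential parts yields
\[
(\nabla^{Q^n}_X A)Y - (\nabla^{Q^n}_Y A)X = s(X)\, JAY - s(Y)\, JAX.
\]

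The main obstacle is upgrading this antisymmetric identity to the full statement $\nabla^{Q^n}_X A = s(X) JA$. Setting $C(X,Y) := (\nabla^{Q^n}_X A)Y - s(X)\, JAY$, the identity reads $C(X,Y) = C(Y,X)$, so the goal becomes $C \equiv 0$. Using $A^2 = I$ (whence $(\nabla_X A)A + A(\nabla_X A) = 0$) and $AJ + JA = 0$ from Lemma~\ref{lem1}, together with $\nabla J = 0$, one checks the algebraic identities $C(X,AY) = -A\,C(X,Y)$ and $C(X,JY) = -J\,C(X,Y)$; the symmetry of $C$ transfers these to the first argument. Decompose $TQ^n = E_+ \oplus E_-$ into the $\pm 1$ eigenspaces of the involution $A$, noting that $J$ interchanges $E_+$ and $E_-$. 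The anti-$A$-linearity in each slot forces $C(X,Y)$ to lie in the $A$-eigenspace opposite to that of $X$ and also to that of $Y$, so $C(X,Y) = 0$ whenever $X$ and $Y$ lie in different eigenspaces. For $X,Y$ in a common eigenspace, $JY$ lies in the opposite one, so the preceding case gives $C(X,JY) = 0$; combined with $C(X,JY) = -J\,C(X,Y)$ and the invertibility of $J$, this forces $C(X,Y) = 0$, completing the proof.
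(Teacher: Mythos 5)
Your proof is correct, and it is worth noting that the paper does not actually prove this lemma at all: it simply cites Smyth and Reckziegel. So you have supplied the argument the paper outsources. Your first part is the standard Weingarten decomposition in the rank-two normal bundle $\mathrm{span}\{\zeta, J\zeta\}$, and your second part is the classical route: the tangential Codazzi equation in the complex space form $\C P^{n+1}(4)$ gives the antisymmetrized identity, and the genuinely nontrivial step --- upgrading it to $\nabla^{Q^n}_X A = s(X)JA$ --- is handled cleanly by your tensor $C$, the anti-commutation relations $C(\cdot,A\cdot) = -AC$, $C(\cdot,J\cdot)=-JC$ (which I checked: they follow from $A^2=I$, $AJ+JA=0$ and $\nabla J=0$), the symmetry of $C$, and the pointwise eigenspace decomposition $TQ^n = E_+\oplus E_-$ with $J E_\pm = E_\mp$. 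This is essentially the mechanism in Smyth's original proof, phrased a bit more explicitly. The one loose end is the adjective \emph{non-zero} in the statement: your argument produces $s$ but never shows $s\neq 0$. This does require a separate observation, e.g.\ that the normal connection of $Q^n$ in $\C P^{n+1}$ is non-flat (via the Ricci equation, $ds$ is a non-zero multiple of the K\"ahler form, reflecting the non-triviality of the normal bundle $\mathcal O(2)|_{Q^n}$), so $s$ cannot vanish on any open set; equivalently, $s\equiv 0$ would make $A$ parallel and hence integrable, contradicting the non-integrability the paper emphasizes. Adding one sentence to that effect would make the proof complete as stated.
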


The equation of Gauss for $Q^n$ as a submanifold of $\C P^{n+1}(4)$ yields the following expression for the Riemann-Christoffel curvature tensor of $Q^n$:
\begin{align} \label{R_Qn}
R & ^{Q^n}(X,Y)Z = g(Y,Z)X - g(X,Z)Y \nonumber \\
& + g(JY,Z)JX - g(JX,Z)JY - 2g(JX,Y)JZ \\
& + g(AY,Z)AX - g(AX,Z)AY + g(JAY,Z)JAX - g(JAX,Z)JAY, \nonumber
\end{align}
where $A$ is any element of $\mathcal A$. It follows directly from \eqref{R_Qn} that $Q^n$ is Einstein.

\begin{remark}
Although the almost product structures in $\mathcal A$ are non-integrable, the complex quadric of complex dimension $2$ is in fact a Riemannian product. Indeed, it was proven in \cite{Jensen1969} that a homogeneous Einstein manifold of real dimension $4$ must have either constant sectional curvature or constant holomorphic sectional curvature, or must be a Riemannian product of two surfaces of equal constant Gaussian curvature $S^2(c) \times S^2(c)$ or $H^2(c) \times H^2(c)$. It follows from \eqref{R_Qn} that $Q^2$ does not have constant (holomorphic) sectional curvature and from computing the maximal sectional curvature, we see that $Q^2 = S^2(4) \times S^2(4)$. 
\end{remark}

\section{Lagrangian submanifolds of the complex quadric}

An isometric immersion $f: M^n \to Q^n$ of a manifold of real dimension $n$ into $Q^n$ is said to be \textit{Lagrangian} if $J$ maps the tangent space to $M^n$ at any point into the normal space to $M^n$ at that point and vice versa. If $f: M^n \to Q^n$ is such a Lagrangian submanifold and $A \in \mathcal A$ is defined at least along $f(M^n)$, it was proven in \cite{Li2018} that, in a neighborhood of any point of $M^n$, there exist an orthonormal frame $\{e_1,\ldots,e_n\}$ on $M^n$ and local angle functions $\theta_1,\ldots,\theta_n$ such that 
\begin{equation} \label{eq:anglefunctions}
A(df)e_j = \cos(2\theta_j) (df)e_j - \sin(2\theta_j) J(df)e_j
\end{equation}
for all $j=1,\ldots,n$. Clearly, the angle functions are only defined up to addition with an integer multiple of $\pi$ and they depend on the choice of $A$.

\begin{remark}[Choice of $A$ along a Lagrangian submanifold of $Q^n$] \label{canonical_choice}
Assume that, apart from a Lagrangian immersion $f: M^n \to Q^n$, also a horizontal lift $\hat f:~M^n \to V^{2n+1}$ of $f$ is given. It follows from \cite{Reckziegel1985} that any Lagrangian immersion into $Q^n$ locally allows such a horizontal lift. If $M^n$ is simply connected, the horizontal lift can be defined globally. Since the normal space to $V^{2n+1}$ in $S^{2n+3}(1) \subseteq \C^{n+2}$ at a point $z$ is the complex span of $\bar z$, one can take $\zeta$, defined by $\zeta_{f(p)}=(d\pi)_{\hat f(p)}\left(\overline{\hat{f}(p)}\right)$, as a unit normal vector field to $Q^n$ in $\C P^{n+1}(4)$ along the image of $f$ and the corresponding shape operator is given by $
A X = -(d\pi)_{\hat f(p)}\left(\overline{\hat X}\right)$, 
where $X$ is any vector tangent to $Q^n$ at a point $f(p)$ and $\hat X$ is its horizontal lift to $\hat f(p)$. In the special case that $v$ is tangent to $M^n$ at a point $p$, we have 
\begin{equation} \label{eq:canonical_choice}
A(df)_p(v) = -(d\pi)_{\hat f(p)}\left(d\overline{\hat f}\right)_p(v). 
\end{equation}
This $A$ can be extended to an element of $\mathcal A$, defined in a neighborhood of $f(M^n)$.
\end{remark}

\section{Lagrangian submanifolds of the complex quadric as Gauss maps}

Several possible definitions for the Gauss map of a hypersurface of a round sphere can be found in the literature. We consider here a definition which was studied in \cite{Palmer1997}. Let $a:M^n \to S^{n+1}(1)$ be a hypersurface of a unit sphere and denote by $b$ a unit normal to the hypersurface, tangent to the sphere. Then the Gauss map of $a$ is the following map from $M^n$ to the complex quadric $Q^n$:
$$ G : M^n \to Q^n : p \mapsto [a(p)+ib(p)]. $$
Looking at $a(p)$ and $b(p)$ as vectors in $\R^{n+2}$, one has $(a(p)+ib(p))/\sqrt 2 \in V^{2n+1}$, such that $[a(p)+ib(p)]=[(a(p)+ib(p))/\sqrt 2]$ is indeed an element of $Q^n$. 

An interesting property that this Gauss map shares with the classical Gauss map of a hypersurface of a Euclidean space is that \textit{parallel hypersurfaces have the same Gauss maps}. Indeed, a parallel hypersurface to a given hypersurface $a$ is obtained by, starting at any point $a(p)$ of the hypersurface, traveling over a distance $t$ along a geodesic of the ambient space with the unit normal $b(p)$ as initial velocity. If the ambient space is $S^{n+1}(1)$, it is easy to see that any parallel hypersurface to $a: M^n \to S^{n+1}(1)$ is given by $a_t: M^n \to S^{n+1}(1) : p \mapsto (\cos t) a(p) + (\sin t) b(p)$ for some $t \in \R$. If $|t|$ is small enough, $a_t$ will, at least locally, be an immersion. A straightforward computation shows that $b_t= -(\sin t) a + (\cos t) b$ is a unit normal to $a_t$ such that $b_0$ equals the original $b$. We conclude that the Gauss map of $a_t$ is given by $G_t = [a_t+ib_t]=[e^{-it}(a+ib)]=[a+ib]=G$.

This Gauss map has attracted quite some attention in recent years, especially in the case of isoparametric hypersurfaces of spheres, i.e., hypersurfaces for which all principal curvatures are constant. We mention for example the works \cite{Ma2009}, \cite {Ma2014} and \cite{Ma2015}. In \cite{Siffert2017}, a study of this Gauss map was proposed as a structural approach to gain a better understanding of the notorious family of isoparametric hypersurfaces of spheres. In \cite{Li2018} a correspondence between the principal curvatures of an isoparametric hypersurface of a sphere and the angle functions of its Gauss map in the sense of \eqref{eq:anglefunctions} was given. Indeed, it turns out that the Gauss map of a hypersurface of a sphere is a Lagrangian immersion into $Q^n$. The following theorem includes this statement and, more importantly, generalizes the result from \cite{Li2018} to arbitrary hypersurfaces of spheres. As mentioned in the abstract, this result is remarkable since both the principal curvatures and the angle functions depend on certain choices.

\begin{theorem} \label{theo1}
Let $a: M^n \to S^{n+1}(1)$ be a hypersurface with unit normal~$b$. Then the Gauss map $G: M^n \to Q^n: p \mapsto [a(p)+ib(p)]$ is a Lagrangian immersion. Moreover, if $A$ is chosen as in Remark \ref{canonical_choice} using the canonical horizontal lift 
\begin{equation} \label{Ghat}
\hat G: M^n \to V^{2n+1}: p \mapsto \frac{1}{\sqrt 2}(a(p)+ib(p)),
\end{equation}
then the relation between the principal curvatures $\lambda_1,\ldots,\lambda_n$ of $a$, with respect to the shape operator associated to $b$, and the angle functions $\theta_1,\ldots,\theta_n$ of $G$ is
\begin{equation} \label{theta_lambda_1} 
\lambda_j = \cot\theta_j
\end{equation}
for $j=1,\ldots,n$.

Conversely, if $f: M^n \to Q^n$ is a Lagrangian immersion, then for every point of $M^n$ there exist an open neighborhood $U$ of that point in $M^n$ and an immersion $a: U \to S^{n+1}(1)$ with Gauss map $f|_U$. This immersion is not unique, nor are its principal curvature functions. However, for any choice of $a$, a local frame of principal directions for $a$ is adapted to $f$ in the sense that \eqref{eq:anglefunctions} holds for any choice of $A$ and the principal curvature functions $\lambda_1,\ldots,\lambda_n$ of $a$ are related to the corresponding local angle functions $\theta_1,\ldots,\theta_n$ by
\begin{equation} \label{theta_lambda_2}
\cot(\theta_j-\theta_k) = \pm \frac{\lambda_j \lambda_k + 1}{\lambda_j - \lambda_k}
\end{equation}
for $j,k=1,\ldots,n$ in points where $\lambda_j \neq \lambda_k$.
\end{theorem}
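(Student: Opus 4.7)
The plan is a direct computation with horizontal lifts of the Hopf fibration $\pi : S^{2n+3}(1) \to \C P^{n+1}(4)$. First verify that $\hat G = (a+ib)/\sqrt 2$ lands in $V^{2n+1}$ (using $|a|=|b|=1$ and $\langle a,b\rangle=0$) and is horizontal: the condition $\langle d\hat G(v), i\hat G\rangle_\R = 0$ reduces to $\langle da(v), b\rangle = 0$, which holds because $b$ is tangent to $S^{n+1}(1)$ and normal to $a$. Hence $G=\pi\circ\hat G$ is an immersion. For the Lagrangian condition, compute $g(J\,dG(v), dG(w))$ via horizontal lifts (with $J$ corresponding to multiplication by $i$); the pairing reduces to $\tfrac12\bigl(\langle da(v), db(w)\rangle - \langle db(v), da(w)\rangle\bigr)$, which vanishes by symmetry of the shape operator $S$ since $db = -da\circ S$. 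For \eqref{theta_lambda_1}, pick a principal frame $\{e_j\}$ with $Se_j = \lambda_j e_j$; then the horizontal lifts of $dG(e_j)$, $J\,dG(e_j)$ and $-A\,dG(e_j)$ are the complex scalar multiples $1-i\lambda_j$, $i+\lambda_j = i(1-i\lambda_j)$ and $1+i\lambda_j$ of the single real vector $da(e_j)/\sqrt 2$ (using \eqref{eq:canonical_choice}). Injectivity of $d\pi$ on horizontal vectors then turns \eqref{eq:anglefunctions} into the scalar identity $-(1+i\lambda_j) = e^{-2i\theta_j}(1-i\lambda_j)$, equivalent to $\lambda_j = \cot\theta_j$, and this simultaneously certifies that $\{e_j\}$ is an adapted frame.

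\textbf{Converse, construction of $a$.} Start from a local horizontal lift $\hat f : U \to V^{2n+1}$ of $f$ (existence from \cite{Reckziegel1985}), write $\hat f = (a_0 + ib_0)/\sqrt 2$, and note that horizontality together with $|a_0|=|b_0|=1$ and $\langle a_0,b_0\rangle=0$ forces $\langle da_0(v), b_0\rangle = 0$, so $b_0$ is automatically orthogonal to the image of $da_0$. The difficulty is that $a_0$ need not be an immersion. To fix this, consider the rotated family $e^{-it}\hat f = (a_t + ib_t)/\sqrt 2$ with $a_t = \cos t\, a_0 + \sin t\, b_0$ and $b_t = -\sin t\, a_0 + \cos t\, b_0$; all these lifts project to the same $f$ (as $[e^{-it}\hat f] = [\hat f]$), and each pair $(a_t, b_t)$ satisfies the same compatibility. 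Since $\hat f$ is itself an immersion, $d\hat f(T_{p_0}M)$ is a Lagrangian real $n$-plane inside the complex $n$-dimensional horizontal tangent space at $\hat f(p_0)$, and a standard Lagrangian-Grassmannian argument shows that the phases $t$ for which $e^{-it}d\hat f(T_{p_0}M)$ fails to be transverse to the imaginary subspace (equivalently, $a_t$ fails to be an immersion at $p_0$) form a finite subset of $S^1$. Pick a good $t_0$ and restrict to a neighborhood $U' \subseteq U$ where $a := a_{t_0}$ remains an immersion; then $a$ has Gauss map $f|_{U'}$, and the non-uniqueness asserted in the theorem is precisely the freedom in $t_0$ (parallel hypersurfaces share the same Gauss map).

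\textbf{Angle functions in the converse.} Applying the forward direction to the constructed $a$ yields $\lambda_j = \cot\theta_j^{\mathrm{can}}$, where $\theta_j^{\mathrm{can}}$ are the angle functions with respect to the canonical $A_{\mathrm{can}}$ arising from the lift $(a+ib)/\sqrt 2$. Any other $A \in \mathcal A$ defined near $f(M^n)$ corresponds to rotating the unit normal to $Q^n$ within $\mathrm{span}(\zeta, J\zeta)$ by some angle $2s$, giving $A = \cos(2s)\,A_{\mathrm{can}} + \sin(2s)\,JA_{\mathrm{can}}$. Using $A_{\mathrm{can}}J = -JA_{\mathrm{can}}$ (Lemma \ref{lem1}) and expanding, one gets $A\,df(e_j) = \cos(2(\theta_j^{\mathrm{can}} - s))\,df(e_j) - \sin(2(\theta_j^{\mathrm{can}} - s))\,J\,df(e_j)$, so the same principal frame remains adapted for every $A$, with shifted angle functions $\theta_j = \theta_j^{\mathrm{can}} - s$. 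Since the shift is independent of $j$, $\theta_j - \theta_k = \theta_j^{\mathrm{can}} - \theta_k^{\mathrm{can}}$, and the cotangent subtraction formula applied to $\cot\theta_j^{\mathrm{can}} = \lambda_j$ gives $\cot(\theta_j - \theta_k) = (\lambda_j\lambda_k + 1)/(\lambda_k - \lambda_j)$, which is \eqref{theta_lambda_2} up to the $\pm$ (absorbing the residual choice of orientation of $b$). The main difficulty in this plan is the rotation-to-immersion step; once it is in place, the rest is essentially trigonometric bookkeeping using the horizontal-lift framework and the explicit formula \eqref{eq:canonical_choice} for $A_{\mathrm{can}}$.
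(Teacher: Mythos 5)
Your proposal is correct and follows essentially the same route as the paper: horizontal lifts through the Hopf fibration, the rotated family $e^{it}\hat f$ of lifts corresponding to parallel hypersurfaces, finiteness of the bad phases at which the real part fails to immerse, and the invariance of angle-function differences under change of $A$ combined with the cotangent subtraction formula. The only real difference is presentational — you invoke the standard Lagrangian--Grassmannian transversality fact where the paper proves finiteness of the bad $t$ explicitly via $(da_t)e_j = \tfrac{1}{\sqrt 2}\bigl(1-e^{-2i(\theta_j^{(0)}+t)}\bigr)(d\hat f_t)e_j$, and you rederive the angle shift $\theta_j=\theta_j^{\mathrm{can}}-s$ from Lemma \ref{lem1} where the paper cites \cite{Li2018} — neither of which changes the substance of the argument.
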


\begin{proof}
Let $\{e_1,\ldots,e_n\}$ be a local orthonormal frame of principal directions on $M^n$ for the immersion $a: M^n \to S^{2n+1}(1)$, say $Se_j=\lambda_je_j$, where $S$ is the shape operator associated to $b$. It follows directly from \eqref{Ghat} that 
\begin{equation} \label{dGhat}
(d\hat G)e_j = \frac{1}{\sqrt 2}(1-i\lambda_j) e_j.
\end{equation}
These vector fields are orthogonal to $i\hat G$ and hence $\hat G$ is indeed horizontal. Moreover, $G$ is Lagrangian since $(d\hat G)e_j$ and $i(d\hat G)e_k$ are orthogonal for all $j,k=1,\ldots,n$. 

For the choice of $A$ given in Remark \ref{canonical_choice}, we have, using \eqref{eq:canonical_choice} and \eqref{dGhat},
\begin{align*}
A & (dG)e_j = -(d\pi) \left(d\overline{\hat G}\right)e_j = -(d\pi) \left( \overline{(d \hat G)e_j} \right) = -(d\pi) \left( \frac{1}{\sqrt 2}(1+i\lambda_j)e_j \right) \\
& = -(d\pi) \left( \frac{1-\lambda_j^2}{1+\lambda_j^2} \, (d \hat G)e_j \! + \! \frac{2\lambda_j}{1+\lambda_j^2} \, i (d \hat G)e_j \! \right) = \frac{\lambda_j^2-1}{\lambda_j^2+1} \, (dG)e_j \! - \! \frac{2\lambda_j}{\lambda_j^2+1} \, J (dG)e_j.
\end{align*}
Comparing this to \eqref{eq:anglefunctions} implies that the angle functions associated to $A$ are determined by
\begin{equation*} 
\cos(2\theta_j) = \frac{\lambda_j^2-1}{\lambda_j^2+1}, \qquad 
\sin(2\theta_j) = \frac{2\lambda_j}{\lambda_j^2+1}
\end{equation*}
and hence $\lambda_j=\cot\theta_j$. 

Conversely, let $f: M^n \to Q^n$ be a Lagrangian immersion and fix a point $p_0 \in M^n$. If, for some open neighborhood $U$ of $p_0$ in $M^n$, the restriction $f|_U$ is the Gauss map of a hypersurface $a:U \to S^{n+1(1)}$ with unit normal $b$, the first part of the proof implies that $U \to V^{2n+1}: p \mapsto (a(p)+ib(p))/\sqrt 2$ must be a horizontal lift of $f|_U$. Finding all the hypersurfaces of $S^{n+1}(1)$ of which $f$ is locally the Gauss map is hence equivalent to finding the local horizontal lifts of $f$ for which the real part is an immersion. 

It follows from \cite{Reckziegel1985} that for every simply connected open neighborhood $U$ of $p_0$ in $M^n$ there exists a horizontal lift $\hat f_0: U \to V^{2n+1}$ of $f$. Moreover, any other horizontal lift of $f$ on $U$ can be written as $\hat f_t = e^{it} \hat f_0$ for some constant $t \in \mathbb R$. Remark that if we split $\hat f_t$ in a real and imaginary part as $\hat f_t=(a_t+ib_t)/\sqrt 2$, then $a_t= (\cos t) a_0 - (\sin t) b_0$ and $b_t= (\sin t) a_0 + (\cos t) b_0$ for all $t$. If $a_0$ and $a_t$ are immersions, they hence define parallel hypersurfaces of $S^{n+1}(1)$. 

In order to investigate the derivative of $a_t$, we remark that 
\begin{equation} \label{eq:atbt}
a_t = \sqrt 2 \, \mathrm{Re} \, \hat f_t = \frac{1}{\sqrt 2} \left( \hat f_t + \overline{\hat f_t} \right), \quad b_t = \sqrt 2 \, \mathrm{Im} \, \hat f_t = -\frac{i}{\sqrt 2} \left( \hat f_t - \overline{\hat f_t} \right).
\end{equation}
Now choose $A_t$ as in Remark \ref{canonical_choice} using the horizontal lift $\hat f_t$. If $\{e_1^{(t)},\ldots,e_n^{(t)}\}$ is a local orthonormal frame adapted to $f$ in the sense of \eqref{eq:anglefunctions}, then there are local functions $\theta_1^{(t)},\ldots,\theta_n^{(t)}$ such that $A_t(df)e_j^{(t)} = \cos(2\theta_j^{(t)})(df)e_j^{(t)} - \sin(2\theta_j^{(t)})J(df)e_j^{(t)}$ for $j=1,\ldots,n$. Taking the horizontal lift to the image of $\hat f_t$ on both sides of this equality yields
\begin{equation} \label{eq:dfbar}
-\left( d \overline{\hat f_t} \right) \! e_j^{(t)} = \cos(2\theta_j^{(t)}) (d\hat f_t)e_j^{(t)} - i \sin(2\theta_j^{(t)}) (d\hat f_t)e_j^{(t)} = e^{-2i\theta_j^{(t)}}(d\hat f_t)e_j^{(t)}
\end{equation}
for $j=1,\ldots,n$. Substituting $\hat f_t = e^{it} \hat f_0$ into \eqref{eq:dfbar} 
and then applying $d\pi$ gives $A_0(df)e_j^{(t)} = \cos(2(\theta_j^{(t)}-t))(df)e_j^{(t)} - \sin(2(\theta_j^{(t)}-t)) J(df)e_j^{(t)}$, where $A_0$ is chosen as in Remark \ref{canonical_choice} using the horizontal lift $\hat f_0$. This implies that the frame $\{e_1^{(t)},\ldots,e_n^{(t)}\}$ does not depend on $t$ --we will denote it by $\{e_1,\ldots,e_n\}$ from now on-- and that the corresponding angle functions of $A_t$ and $A_0$ are related by 
\begin{equation} \label{eq:thetat_theta0}
\theta_j^{(t)} = \theta_j^{(0)}+t.
\end{equation}
From \eqref{eq:atbt}, \eqref{eq:dfbar} and \eqref{eq:thetat_theta0} we obtain
\begin{equation} \label{eq:dat} 
(da_t)e_j = \frac{1}{\sqrt 2}\left( 1-e^{-2i(\theta_j^{(0)}+t)} \right)(d\hat f_t)e_j
\end{equation}
for $j=1,\ldots,n$. If we choose $t \in \R$ such that $\theta_j^{(0)}(p_0) + t$ is not an integer multiple of $\pi$ for $j=1,\ldots,n$ and, if necessary, we shrink $U$ to $U_t$ such that none of the functions $\theta_j^{(0)} + t$ attains an integer multiple of $\pi$ on $U_t$, then $a_t|_{U_t}$ is an immersion. There are hence infinitely many choices of $t \in \R$ for which $a_t$ is an immersion in a neighborhood of $p_0$.

Now choose any $t \in \mathbb R$ such that $a_t:U_t \subseteq M^n \to S^{n+1}(1)$ is an immersion. In order to find the principal curvatures of $a_t$, we compute the derivative of the corresponding $b_t$. From \eqref{eq:atbt}, \eqref{eq:dfbar}, \eqref{eq:thetat_theta0} and \eqref{eq:dat} we find
\begin{equation} \label{eq:dbt} 
(db_t)e_j = -\frac{i}{\sqrt 2}\left( 1+e^{-2i(\theta_j^{(0)}+t)} \right)(d\hat f_t)e_j = -\cot(\theta_j^{(0)}+t) (da_t)e_j
\end{equation}
This implies that $\{e_1,\ldots,e_n\}$ is a local frame of principal directions for the hypersurface $a_t$ and that the principal curvatures of $a_t$ defined using the shape operator associated to $b_t$ are given by $\lambda_j^{(t)} = \cot(\theta_j^{(0)}+t)$ for $j=1,\ldots,n$. A first issue is that the principal curvatures are only defined up to sign: if we change the orientation of the unit normal, the signs of the principal curvatures change. A second issue is that the local angle functions $\theta_j^{(0)}$ are only defined through the choice of the almost product structure $A_0$. If one chooses an $A \in \mathcal A$, which is at least defined along $f(U_t)$, then there exists a function $\varphi: U_t \to \mathbb R$ such that $A = \cos\varphi A_0 + \sin\varphi JA_0$ along $f(U_t)$ and it was shown in \cite{Li2018} that the local angle functions associated to $A$ are given by $\theta_j = \theta_j^{(0)} - \varphi/2$ for $j=1,\ldots,n$. This implies that the difference of two local angle functions does not depend on the choice of $A$. Hence, using the formula for the cotangent of a difference, we can state that 
$$ \cot(\theta_j \!-\! \theta_k) = \cot((\theta_j^{(0)} \!\! + \! t) \! - \! (\theta_k^{(0)} \!\! + \! t)) = \frac{(\pm\lambda_j^{(t)})(\pm\lambda_k^{(t)})+1}{(\pm\lambda_j^{(t)})-(\pm\lambda_k^{(t)})} = \pm \frac{\lambda_j^{(t)} \lambda_k^{(t)} + 1}{\lambda_j^{(t)} - \lambda_k^{(t)}} $$
for all $j,k=1,\ldots,n$ in those points where $\lambda_j^{(t)} \neq \lambda_k^{(t)}$. In particular, the right hand side does not depend on $t$. In other words: it does not depend on the chosen horizontal lift of $f$, as long as the real part of this lift is an immersion, or, equivalently, it remains invariant when changing from a hypersurface of a sphere to a parallel hypersurface. This last fact can also be checked directly. 
\end{proof}

\bibliographystyle{amsplain}

\end{document}